\apptocmd{\sloppy}{\hbadness 10000\relax}{}{}
\newtheorem{theorem}{Theorem}[section]
\newtheorem{lemma}[theorem]{Lemma}
\newtheorem{proposition}[theorem]{Proposition}
\theoremstyle{definition}
\newtheorem{definition}[theorem]{Definition}
\theoremstyle{remark}
\newtheorem{remark}[theorem]{Remark}
\newcommand{\NN}{\mathbb{N}}
\newcommand{\CC}{\mathbb{C}}
\newcommand{\hol}{\mathcal{O}}
\newcommand{\Aut}{\operatorname{\mathrm{Aut}}}
\newcommand{\AutA}{\operatorname{{\mathrm{Aut}}_{\mathrm{alg}}}}
\newcommand{\AAutH}{\operatorname{{\mathrm{AAut}}_{\mathrm{hol}}}}
\title[The density property for Gizatullin surfaces]{The density property for Gizatullin surfaces of type $[[0,0,-r_2,-r_3]]$}
\author[R.~Andrist]{Rafael~B.~Andrist}
\address{Rafael B. Andrist \\ Fakult\"at f\"ur Mathematik und Natur\-wissen\-schaften \\ Bergische Universit\"at Wuppertal \\ Germany}
\email{rafael.andrist@math.uni-wuppertal.de}
\author[F.~Kutzschebauch]{Frank~Kutzschebauch}
\address{Frank Kutzschebauch \\ Universit\"{a}t Bern \\ Mathematisches Institut \\Sidlerstrasse 5 \\ CH-3012 Bern \\ Switzerland}
\email{frank.kutzschebauch@math.unibe.ch}
\author[P.-M.~Poloni]{Pierre-Marie~Poloni}
\address{Pierre-Marie Poloni \\ Universit\"{a}t Bern \\ Mathematisches Institut \\Sidlerstrasse 5 \\ CH-3012 Bern \\ Switzerland}
\email{pierre.poloni@math.unibe.ch}
\thanks{The first author would like to thank the University of Bern for their hospitality. This paper grew out of a three month stay in Bern.}
\subjclass{Primary 14R20, 32M17, Secondary 14R10}
\begin{document}

\begin{abstract}
Gizatullin surfaces of type $[[0,0,-r_2,-r_3]]$ can be described by the equations $yu=xP(x)$, $xv=uQ(u)$ and $yv=P(x)Q(u)$ in $\CC^4_{x,y,u,v}$ where $P$ and $Q$ are non-constant polynomials. We establish the algebraic density property for smooth Gizatullin surfaces of this type. Moreover we also prove the density property for smooth surfaces given by these equations when $P$ and $Q$ are holomorphic functions. 
\end{abstract}

\maketitle

\section{Introduction}

Motivated by the question how to characterize affine $n$-space among affine algebraic/Stein  manifolds and the natural attempt to use the automorphism group for this characterization, in the last decades affine algebraic varieties and Stein manifolds with big (infinite-dimensional) automorphism groups have been studied intensively.
Among others the following two notions expressing the fact that the automorphisms group of a manifold is big, have been proposed:
The (algebraic) density property and holomorphic flexibility with the former implying the latter. 
Both the density property and holomorphic flexibility imply that the manifold in question is an Oka--Forstneri\v{c} manifold, i.e., a suitable target for maps enjoying the homotopy principle, we refer the interested reader to the overview article \cite{K}. In its simplest form this homotopy principle says that continuous maps from Stein spaces to Oka--Forstneri\v{c} manifolds are homotopic to holomorphic maps.

This important notion of Oka--Forstneri\v{c} manifold has also recently emerged from the intensive studies
around the homotopy principle which goes back to the 1930s and has had an enormous impact
on the development of Complex Analysis with a constantly growing number of applications (for definitions and more information we refer the reader to the text book \cite{For}).
Let us just recall the 

\begin{definition}[\cite{Varolin1}]
A complex manifold $X$ has the \emph{density property} if the Lie algebra $\mathrm{Lie}_{\mathrm{hol}}(X)$ generated by $\CC$-complete holomorphic vector fields on $X$ is dense in the Lie algebra of all holomorphic vector fields $\mathrm{VF}_{\mathrm{hol}}(X)$ on $X$ w.r.t.\ compact-open topology.
\end{definition}

\begin{definition}[\cite{Varolin1}]
An algebraic manifold $X$ has the \emph{algebraic density property} if the Lie algebra $\mathrm{Lie}_{\mathrm{alg}}(X)$ generated by $\CC$-complete algebraic vector fields on $X$ coincides with the Lie algebra of all algebraic vector fields $\mathrm{VF}_{\mathrm{alg}}(X)$ on $X$.
\end{definition}

Surprisingly  affine $n$-space  is by far not the only manifold with this properties. A large number of examples of such highly symmetric objects has been found and their classification and the exact relations between all mentioned properties remain unclear even in complex dimension $2$.
In particular, we do not even know the description of Stein surfaces $X$ on which the group of holomorphic automorphisms $\mathrm{Aut}_\mathrm{hol}(X)$ acts transitively.

In the algebraic case the question of transitivity
was ``almost'' resolved in the papers of Gizatullin and Danilov \cite{Gi}, \cite{dan-giz-autos} and we need the following definition to state their result.

\begin{definition} We call a normal Stein (resp.\ affine algebraic) surface $X$ quasi-homogeneous with respect to a subgroup $G$ of the group of its holomorphic (resp.\ algebraic)
automorphisms if the natural action of $G$ has an open orbit in $X$ whose complement is at most finite.
A normal algebraic surface is called quasi-homogeneous (without any reference to a group) if it is quasi-homogeneous with respect to the group $\AutA (X)$ of algebraic automorphisms\footnote{We cite the footnote in \cite{KKL}*{p.\ 2}: When the complement to the open orbit is empty, one has transitivity.
However there are examples of smooth quasi-homogeneous surfaces for which the complements of the open orbits are not empty. In the case of surfaces over algebraically closed field of positive characteristic they appeared already in the paper of
Gizatullin and Danilov \cite{dan-giz-quasi} who also knew, but did not publish such examples for characteristic zero.
In a published form examples of complex quasi-homogeneous surfaces with non-empty complements can be found in a recent paper of Kovalenko \cite{nontrans}.}.
\end{definition}

With the exception of the two-dimensional torus $\CC^\ast \times \CC^\ast$  and $\CC \times \CC^\ast$, every normal open quasi-homogeneous surface admits a completion
$\bar X$ by a simple normal crossing divisor such that the dual graph $\Gamma$ of its boundary $\bar X \setminus X$ is a linear rational graph \cites{Gi, dan-giz-autos} which can be always chosen in the following standard form (a so-called \emph{standard zigzag})
\[
[[0, 0,-r_2 , \dots, -r_d]]
\]
where $d \geq 2$ and $r_j \geq 2$ for $j = 2, \dots, d$.

A surface admitting such a completion will be called a \emph{Gizatullin surface}.

Recall that a holomorphic vector field $\nu$ on a complex space $X$ is called complete if the solution of the ODE
\[
\frac{d}{dt} \varphi (x,t) = \nu (\varphi (x,t)), \quad \varphi (x,0) = x
\]
is defined for all complex times $t \in \CC$ and all initial values $x \in X$. The induced maps
$\Phi_t \colon X \to X$ given by $\Phi_t (x) = \varphi (x,t)$
yield the phase flow of $\nu$ which is nothing but a one-parameter subgroup in the group $\mathrm{Aut}_\mathrm{hol}$  of holomorphic
automorphism with parameter $t \in \CC_+$ (so-called holomorphic $\CC_+$-action). 
It is worth mentioning that when $X$ is an affine algebraic variety and $\nu$ is a complete algebraic vector field its phase flow may well be non-algebraic. 

Kaliman, Leuenberger and the second author recently extended this  quasi-homogeneity results  a certain step towards the analytic situation. Replacing locally nilpotent vector fields by complete algebraic vector fields and $\mathrm{Aut}_\mathrm{alg} (X)$ by its analogue, the group $\AAutH (X)$ of algebraically generated holomorphic automorphisms, they classified normal affine algebraic surfaces which are generalized Gizatullin, in terms of their boundary divisors in the completion (for the list see \cite{KKL}*{Theorem 1.4}).

\begin{definition} A holomorphic automorphism $\alpha$ of an algebraic variety $X$ will be called \emph{algebraically generated} if $\alpha$ coincides with an element
$\Phi_t$ of the phase flow of a complete algebraic vector field on $X$ as before.
The subgroup of the holomorphic automorphism group generated by such algebraically generated  automorphisms will be denoted by $\AAutH (X)$, 
and a normal affine algebraic surface $X$ quasi-homogeneous w.r.t.\ $\AAutH (X)$ will be called \emph{generalized Gizatullin surface}. 
\end{definition}

Thus studying the relation between flexibility properties, say for surfaces, the following question is of central importance:
\medskip

 \emph{Which smooth generalized Gitzatullin surfaces have the (algebraic) density property?}
\medskip

Concerning this question, only the following was known so far:
For so-called Danilov--Gizatullin surfaces, which are described by a zigzag $[[0, 0, -2, \dots, -2]]$ and completely determined up to isomorphism by the length of the zigzag, the density property was proved in \cite{dan-giz-dens}.

For $d = 2$, resp.\ a zigzag  $[[0, 0, -r_2]]$, one obtains the so-called Danielewski surfaces. Their algebraic automorphism group is well understood, and their holomorphic automorphism group was studied in \cites{Danielewski1, Danielewski2}. The density property for Danielewski surfaces is proved in \cite{KKhyper}.

\bigskip

In this paper we study the case $d = 3$. The algebraic automorphism group for these Gizatullin surfaces  has been investigated by Blanc and Dubouloz in \cite{alghuge}. They show that the algebraic automorphism group is large in the sense that any countable union of algebraic subgroups is strictly smaller than the automorphism group. While this phenomenon was known to occur for affine-algebraic manifolds of higher dimension, it is the first class of examples for algebraic surfaces. We use the following description of the above mentioned class proved in  their paper \cite{alghuge}*{Proposition 3.2}:

Any Gizatullin surface with a completion by a zigzag $[[0,0,-r_2,-r_3]]$ is isomorphic to an algebraic surface of the form $S_{P, Q}$ for non-constant polynomials $P$ and $Q$ defined as follows:

\begin{definition}\label{defi:surface}
We define the complex-analytic surface $S_{P,Q} \subset \CC^4_{x,y,u,v}$ by
\begin{equation}
\label{eqsurface}
\left\{ \begin{array}{lcl}
yu & = & x P(x) \\
xv & = & u Q(u) \\
yv & = & P(x) Q(u)
\end{array} \right.
\end{equation}
\end{definition}

\begin{remark}
The surfaces $S_{P,Q}$ are smooth if and only if $P$ and $Q$ have only simple zeros and not both of them vanish in $0$. For constant $P$ or $Q$ these equations define a Danielewski surface or $\CC \times \CC^\ast$.
\end{remark}

The main results of the paper resolve the above question for the Gizatullin surfaces with a completion by a zigzag $[[0,0,-r_2,-r_3]]$ and their holomorphic analogues, namely we prove:

\begin{theorem}
\label{mainsmoothalg}
The smooth Gizatullin surfaces $S_{P, Q}$ with polynomials $P$ and $Q$ enjoy the algebraic density property.
\end{theorem}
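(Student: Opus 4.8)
The natural route is the Kaliman--Kutzschebauch criterion, which reduces the algebraic density property to the construction of sufficiently many \emph{complete} algebraic vector fields together with a compatibility condition relating two of them. Since every field in play is already algebraic, only the inclusion $\mathrm{VF}_{\mathrm{alg}}(S_{P,Q}) \subseteq \mathrm{Lie}_{\mathrm{alg}}(S_{P,Q})$ has to be established.

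First I would exploit the Gizatullin structure: a surface completed by a zigzag with a zero weight at each of its two ends carries two $\mathbb{A}^1$-fibrations over $\CC$, hence two $\Ga$-actions, i.e.\ two locally nilpotent derivations $\delta_1, \delta_2$ of $\mathcal{O}(S_{P,Q})$, which are already implicit in the description of $S_{P,Q}$ in \cite{alghuge}. I would write $\delta_1$ and $\delta_2$ explicitly in the coordinates $x,y,u,v$ and compute their rings of invariants $\ker\delta_1$ and $\ker\delta_2$ (the coordinate rings of the two quotient lines). The elementary but essential observation is that for a complete locally nilpotent field $\delta$ and any invariant $f \in \ker\delta$, the field $f\delta$ is again complete; hence $(\ker\delta_1)\,\delta_1$ and $(\ker\delta_2)\,\delta_2$ are two large families of complete fields lying inside $\mathrm{Lie}_{\mathrm{alg}}(S_{P,Q})$.

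Next I would verify that $(\delta_1, \delta_2)$ is a \emph{semi-compatible} pair in the sense of Kaliman--Kutzschebauch: the two fields are linearly independent at a generic point, so that $\delta_1 \wedge \delta_2$ is a nonzero bivector generically (which suffices on a surface), and the linear span of the products $\ker\delta_1 \cdot \ker\delta_2$ contains a nonzero ideal of $\mathcal{O}(S_{P,Q})$. Forming Lie brackets $[f\delta_1, g\delta_2]$ with $f \in \ker\delta_1$, $g \in \ker\delta_2$ and combining them with the two families above then yields an $\mathcal{O}(S_{P,Q})$-submodule $I \cdot \mathrm{VF}_{\mathrm{alg}}(S_{P,Q}) \subseteq \mathrm{Lie}_{\mathrm{alg}}(S_{P,Q})$ for a concrete ideal $I$.

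The crux, and the step I expect to be hardest, is to show that $I$ is the whole ring, equivalently that the complete fields at our disposal span the tangent space at \emph{every} point of $S_{P,Q}$. Generically $\delta_1$ and $\delta_2$ already do so, but at the finitely many common zeros of $\delta_1$ and $\delta_2$ — the fixed points of the two actions, sitting over the zeros of $P$ and $Q$ and at the degenerate fibres — both fields vanish, and the missing tangent directions must instead be produced through the bracket $[\delta_1,\delta_2]$ and through fields $f\delta_i$ rescaled by invariants vanishing to the appropriate order. This is exactly where the smoothness hypotheses on $S_{P,Q}$ — that $P$ and $Q$ have only simple zeros and do not both vanish at $0$ — enter: they guarantee that the local geometry at each such point is standard enough for $[\delta_1,\delta_2]$ to be transversal there. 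Carrying out this local analysis uniformly in $P$ and $Q$, and across the different types of boundary points, is the main obstacle; once it is done, the remaining steps are essentially formal applications of the criterion.
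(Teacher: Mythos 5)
Your overall strategy --- the Kaliman--Kutzschebauch criterion, a submodule of $\mathrm{Lie}_{\mathrm{alg}}(S_{P,Q})$ built from complete fields, plus a pointwise spanning/transitivity argument --- is indeed the skeleton of the paper's proof. But the concrete plan has a gap that is fatal as stated: you propose to generate everything from the two locally nilpotent derivations $\delta_1,\delta_2$ attached to the two $\mathbb{A}^1$-fibrations (here $\delta_1=\varphi_\ast\bigl(y^2\frac{\partial}{\partial x}\bigr)$ and $\delta_2=\psi_\ast\bigl(v^2\frac{\partial}{\partial u}\bigr)$), their replicas $f\delta_i$ with $f\in\ker\delta_i$, and the bracket $[\delta_1,\delta_2]$. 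Whenever $P$ and $Q$ both have a root, the point $(x_0,0,u_0,0)$ with $P(x_0)=Q(u_0)=0$ lies on $S_{P,Q}$ and is a common zero of $\delta_1$ and $\delta_2$. Since the Lie bracket of two vector fields vanishing at a point again vanishes there, and $f\delta_i$ vanishes wherever $\delta_i$ does, \emph{every} field in the Lie algebra generated by your families vanishes at such a point. So no amount of bracketing or rescaling by invariants can produce the missing tangent directions there, and in particular $[\delta_1,\delta_2]$ cannot be ``transversal'' at these points as you claim. The paper gets around this in two ways: it uses additional complete fields that are not locally nilpotent, namely $\varphi_\ast\bigl(xy\frac{\partial}{\partial x}\bigr)$, $\varphi_\ast\bigl(xy\frac{\partial}{\partial y}\bigr)$, $\chi_\ast\bigl(xu\frac{\partial}{\partial x}\bigr)$, $\chi_\ast\bigl(xu\frac{\partial}{\partial u}\bigr)$ and their shears (Lemma \ref{easycomplete}), and for algebraic transitivity on the finite exceptional set $\Lambda_{P,Q}$ it produces genuinely new $\CC_+$-actions by conjugating with the explicit isomorphisms $S_{P,Q}\to S_{P,\widetilde Q}$ of Lemma \ref{lemma:explicitIso}; the smoothness hypothesis enters there through the simplicity of the roots, not through a local normal form at the fixed points.

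Two further points. First, your claim that one must show ``$I$ is the whole ring'' is both unnecessary and unattainable: Theorem \ref{modulealg} only requires the fiber of the submodule over a \emph{single} point to contain a generating set, with homogeneity established separately; the paper's ideal is of the form $(xyuT)$ and is very far from being everything, and a single nonzero vector in the $\frac{\partial}{\partial y}$-direction suffices as generating set once one conjugates by the time-one map of the complete shear $(y-y_0)\varphi_\ast\bigl(y^2\frac{\partial}{\partial x}\bigr)$. Second, the semi-compatibility you assert --- that the linear span of $\ker\delta_1\cdot\ker\delta_2$ contains a nonzero ideal --- is not justified and is doubtful here: the kernels are essentially $\CC[y]$ and $\CC[v]$, and the span of the monomials $y^iv^j$ (using only $yv=P(x)Q(u)$) does not visibly contain a nonzero ideal of $\CC[S_{P,Q}]$. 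This appears to be precisely why the authors replace the compatible-pair mechanism by the longer explicit chain of brackets in Proposition \ref{anideal}, which mixes the three coordinate charts. Your outline identifies the right criterion, but the hard content of the paper lies exactly in the two places your plan passes over.
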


\begin{theorem}
\label{mainsmoothhol}
The smooth surfaces $S_{P, Q}$ with holomorphic functions $P$ and $Q$ enjoy the density property.
\end{theorem}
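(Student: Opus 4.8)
The plan is to mirror the proof of Theorem~\ref{mainsmoothalg}, carrying out the same construction of complete vector fields and the same generation argument, but now with $P$ and $Q$ holomorphic and with \emph{equality} of modules of algebraic vector fields replaced throughout by \emph{density} in $\mathrm{VF}_{\mathrm{hol}}(S_{P,Q})$. It is worth stressing at the outset that one cannot simply deduce Theorem~\ref{mainsmoothhol} from Theorem~\ref{mainsmoothalg} by approximating holomorphic $P,Q$ by polynomials: the surface $S_{P,Q}$ itself varies with the data, so no single target is held fixed. The argument must therefore be uniform in $P$ and $Q$, and the essential observation is that every formula and every bracket identity appearing in the algebraic proof involves $P,Q,P',Q'$ only \emph{holomorphically}, and hence stays valid when $P$ and $Q$ are mere holomorphic functions.

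First I would produce the complete holomorphic vector fields. The equations $yu=xP(x)$ and $xv=uQ(u)$ each endow $S_{P,Q}$ with a Danielewski-type structure, and each carries a $\CC_+$-action, say $\delta_1$ and $\delta_2$, obtained by translating along the fibres of the corresponding projection to $\CC$. The crucial point is that, although the associated derivations fail to be algebraically locally nilpotent once $P$ or $Q$ is transcendental, the flows are still globally defined: they are given by explicit divided-difference expressions in $P$ and $Q$ that extend holomorphically across the degenerate loci $\{x=0\}$ and $\{u=0\}$, so that $\delta_1$ and $\delta_2$ are complete holomorphic vector fields for \emph{all} holomorphic $P,Q$. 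Moreover, for any $h$ in the kernel of $\delta_1$ the field $h\delta_1$ is again a complete $\CC_+$-field, and likewise for $\delta_2$; this is a formal consequence of the $\CC_+$-structure (the first integral $h$ is preserved by the flow, which merely reparametrises the time) and does not require $P,Q$ to be polynomial.

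Next I would run the generation step. For $h\in\ker\delta_1$ the bracket
\[
[\delta_2,\,h\delta_1]=\delta_2(h)\,\delta_1+h\,[\delta_2,\delta_1]
\]
places $\delta_2(h)\,\delta_1$ in the Lie algebra $\mathrm{Lie}_{\mathrm{hol}}(S_{P,Q})$, and symmetrically $\delta_1(g)\,\delta_2$ for $g\in\ker\delta_2$. Since the kernels $\ker\delta_1$ and $\ker\delta_2$ are rich---each consisting of the functions constant along one of the two fibrations---the functions $\delta_2(h)$ and $\delta_1(g)$ generate, after further multiplication and bracketing, a submodule of $\mathrm{VF}_{\mathrm{hol}}(S_{P,Q})$ that is \emph{dense}. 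This is exactly the content of the computation in the algebraic case, where one shows the analogous module equals all algebraic vector fields; transported to the holomorphic setting the same linear algebra over $\hol(S_{P,Q})$ yields density in place of equality. Together with the verification that the complete fields span the tangent space at the generic point, the holomorphic density criterion (in the spirit of Varolin and of Kaliman--Kutzschebauch) then gives that $\mathrm{Lie}_{\mathrm{hol}}(S_{P,Q})$ is dense in $\mathrm{VF}_{\mathrm{hol}}(S_{P,Q})$.

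The hard part will be the behaviour over the finitely many degenerate fibres---those lying over the zeros of $P$ and of $Q$ and over the point $0$---where the two fibrations are singular and the generated module a priori drops rank. Showing that no tangent direction is missed there is precisely where the smoothness hypothesis (simple zeros of $P,Q$, and not both vanishing at $0$) is used, and it is the step demanding genuine local analysis rather than bookkeeping. A secondary but real point is to confirm that the completeness of all the fields $h\delta_1$ and $g\delta_2$ persists without algebraic local nilpotency; this I expect to follow cleanly from the explicit flows, so the weight of the proof rests on the local study at the degenerate fibres and on upgrading module-equality to density.
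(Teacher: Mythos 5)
Your high-level strategy is the paper's: reduce to the Kaliman--Kutzschebauch criterion by establishing (i) homogeneity under holomorphic automorphisms and (ii) a submodule of $\mathrm{VF}_{\mathrm{hol}}(S_{P,Q})$ contained in $\overline{\mathrm{Lie}_{\mathrm{hol}}(S_{P,Q})}$ whose fiber at one point contains a generating set, and indeed the paper's own proof of this theorem is literally ``run the algebraic proof again, with Proposition~\ref{prop:holotrans} supplying transitivity and with complete-but-not-locally-nilpotent fields.'' However, your execution has genuine gaps. First, the bracket identity $[\delta_2,h\delta_1]=\delta_2(h)\,\delta_1+h\,[\delta_2,\delta_1]$ does not place $\delta_2(h)\,\delta_1$ in the Lie algebra unless you separately account for the term $h\,[\delta_2,\delta_1]$, which is not a complete field and not a priori in $\mathrm{Lie}_{\mathrm{hol}}$. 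The paper's Proposition~\ref{anideal} avoids this by a carefully ordered chain of brackets in which each unwanted summand is subtracted off because it was \emph{already} produced at an earlier stage; moreover it crucially uses the third chart $\chi$ and the fields $\chi_\ast\bigl(xu^{\ell+1}\frac{\partial}{\partial x}\bigr)$ to introduce powers of $u$ (and, by symmetry, of $v$) into the coefficients --- without this, the coefficients obtained in front of $\varphi_\ast\bigl(\frac{\partial}{\partial y}\bigr)$ form an ideal of $\hol(\CC^2_{x,y})$ but \emph{not} an ideal of $\hol(S_{P,Q})$, and the submodule hypothesis of the criterion fails. Your two-fibration picture with only $\delta_1,\delta_2$ misses this step entirely; note also that the naive translation field $\varphi_\ast\bigl(y\frac{\partial}{\partial x}\bigr)$ extends to $S_{P,Q}$ only when $P(0)=0$, so even the starting complete fields must be the quadratic ones $\varphi_\ast\bigl(y^2\frac{\partial}{\partial x}\bigr)$, $\varphi_\ast\bigl(xy\frac{\partial}{\partial x}\bigr)$, etc.

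Second, you mischaracterize what the algebraic computation achieves and hence where the difficulty sits. The paper never shows the generated module is all of (or dense in) $\mathrm{VF}$; it produces only the rank-one submodule $L=I\,\varphi_\ast\bigl(\frac{\partial}{\partial y}\bigr)$ for a nontrivial ideal $I$, and the upgrade to the full density property is done by the criterion itself, using homogeneity plus a generating set in the fiber of $L$ at a \emph{single} well-chosen point with $y_0\neq 0$ (where the shear $(y-y_0)\cdot\varphi_\ast\bigl(y^2\frac{\partial}{\partial x}\bigr)$ rotates the $\frac{\partial}{\partial y}$-direction into $\frac{\partial}{\partial x}$). Consequently the ``hard part'' you defer --- controlling the module over the degenerate fibres --- is not actually needed; what \emph{is} needed there, and what you do not supply, is the transitivity statement itself (Proposition~\ref{prop:holotrans}), whose proof in the holomorphic case is a separate case-by-case argument at the points with $xu=0$ using the explicit forms of $\varphi_\ast\bigl(y^2\frac{\partial}{\partial x}\bigr)$, $\varphi_\ast\bigl(xy\frac{\partial}{\partial x}\bigr)$, $\psi_\ast\bigl(v^2\frac{\partial}{\partial u}\bigr)$, $\psi_\ast\bigl(uv\frac{\partial}{\partial u}\bigr)$ and the simplicity of the zeros of $P$ and $Q$. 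Your observation that completeness of the basic fields and of their shear multiples persists for holomorphic $P,Q$ is correct and is the same point the paper makes (Remark~\ref{overshearcomplete} and the density argument in Lemma~\ref{easycomplete}), but as it stands the proposal neither constructs the required submodule nor establishes homogeneity.
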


The paper is organized as follows. In Section \ref{secdensity} we recall the methods for proving the density property and the algebraic density property which were developed by Kaliman and the second author: The idea is to find a submodule in the Lie algebra generated by complete vector fields together with some transitivity conditions.

In Section \ref{seccoordinates} we prepare for some explicit calculations on the complex surfaces $S_{P, Q}$. In Section \ref{sectrans} we provide the first ingredient, the transitivity and in Section \ref{secsubmodule} the second ingredient, the existence of a submodule in the Lie algebra generated by complete vector fields. Finally, we deduce our Theorems in Section \ref{secmainproof}.

We thank Matthias Leuenberger for carefully reading the manuscript and checking all calculations, especially for pointing out a calculation error in an earlier version of the paper.

\section{Density Property}
\label{secdensity}

In this section we briefly recall the notions and Theorems we need for proving our result.

\begin{definition}[\cite{denscrit}*{Definition 2.2}] \hfill
\begin{enumerate}
\item Let $X$ be an algebraic manifold and $x_0 \in X$.
A finite subset $M$ of the tangent space $T_{x_0} X$ is called a \emph{generating set}
if the image of $M$ under the action of the isotropy subgroup of $x_0$
(in the group of all algebraic automorphisms $\mathrm{Aut}_\mathrm{alg}(X)$)
generates the whole space $T_{x_0} X$.
\item Let $X$ be a complex manifold and $x_0 \in X$.
A finite subset $M$ of the tangent space $T_{x_0} X$ is called a \emph{generating set}
if the image of $M$ under the action of the isotropy subgroup of $x_0$
(in the group of all holomorphic automorphisms $\mathrm{Aut}_\mathrm{hol}(X)$)
generates the whole space $T_{x_0} X$.
\end{enumerate}

\end{definition}

We will make use of the following, central result of \cite{denscrit}:
\begin{theorem}\cite{denscrit}*{Theorem 1}
\label{modulealg}
Let $X$ be an affine algebraic manifold, homogeneous w.r.t.\ $\mathop{\mathrm{Aut}_{\mathrm{alg}}} X$, with algebra of regular functions $\CC[X]$, and $L$ be a submodule of the $\CC[X]$-module of all algebraic vector fields such that $L \subseteq \mathrm{Lie}_{\mathrm{alg}}(X)$.
Suppose that the fiber of $L$ over some $x_0 \in X$ contains a generating set.
Then $X$ has the algebraic density property.
\end{theorem}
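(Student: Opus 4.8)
The plan is to establish the stronger statement that the submodule $L$, once spread around by the automorphism action, already fills the entire module $\mathrm{VF}_{\mathrm{alg}}(X)$ of algebraic vector fields; this forces $\mathrm{Lie}_{\mathrm{alg}}(X) = \mathrm{VF}_{\mathrm{alg}}(X)$, which is the algebraic density property. The first observation I would record is that $\mathrm{Lie}_{\mathrm{alg}}(X)$ is invariant under pushforward by every algebraic automorphism: if $\nu$ is a complete algebraic vector field with flow $\Phi_t$ and $\alpha \in \AutA(X)$, then $\alpha_*\nu$ is again a complete algebraic vector field, with flow $\alpha \circ \Phi_t \circ \alpha^{-1}$. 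Since $\alpha_*$ is a Lie-algebra automorphism of $\mathrm{VF}_{\mathrm{alg}}(X)$ sending generators to generators, it preserves the generated Lie algebra, so $\alpha_* \mathrm{Lie}_{\mathrm{alg}}(X) = \mathrm{Lie}_{\mathrm{alg}}(X)$.

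Next I would form the $\CC[X]$-submodule $N$ generated by the union $\bigcup_{\alpha \in \AutA(X)} \alpha_* L$. Because each $\alpha_* L$ is itself a $\CC[X]$-submodule contained in $\mathrm{Lie}_{\mathrm{alg}}(X)$, the module $N$ still lies in $\mathrm{Lie}_{\mathrm{alg}}(X)$, and by construction $\gamma_* N = N$ for every $\gamma \in \AutA(X)$. I would then compute the fiber $N(x_0) \subseteq T_{x_0}X$: for any $m$ in the generating set $M$, choosing $\nu \in L$ with $\nu(x_0) = m$ and any $\alpha$ in the isotropy subgroup of $x_0$, the field $\alpha_*\nu$ lies in $N$ and satisfies $(\alpha_*\nu)(x_0) = d\alpha_{x_0}(m)$. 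Hence $N(x_0)$ contains the image of $M$ under the isotropy subgroup, which by hypothesis spans $T_{x_0}X$, so $N(x_0) = T_{x_0}X$. Using homogeneity together with $\gamma_* N = N$, transporting by an automorphism carrying $x_0$ to an arbitrary point $x$ yields $N(x) = T_x X$ for every $x \in X$.

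Finally I would upgrade this fiberwise surjectivity to an equality of modules. Since $X$ is a smooth affine variety, $\CC[X]$ is Noetherian and $\mathrm{VF}_{\mathrm{alg}}(X)$ is a finitely generated (projective) $\CC[X]$-module whose fiber at each closed point $x$ is canonically $T_x X$; the submodule $N$ is therefore also finitely generated, and the quotient $\mathrm{VF}_{\mathrm{alg}}(X)/N$ is finitely generated with $(\mathrm{VF}_{\mathrm{alg}}(X)/N) \otimes_{\CC[X]} \kappa(x) = 0$ at every closed point, because $N(x) = T_x X$. Nakayama's lemma then forces $\mathrm{VF}_{\mathrm{alg}}(X)/N = 0$, i.e.\ $N = \mathrm{VF}_{\mathrm{alg}}(X)$. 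Combined with the inclusions $N \subseteq \mathrm{Lie}_{\mathrm{alg}}(X) \subseteq \mathrm{VF}_{\mathrm{alg}}(X)$ this gives $\mathrm{Lie}_{\mathrm{alg}}(X) = \mathrm{VF}_{\mathrm{alg}}(X)$, the desired conclusion. The main obstacle is precisely this last passage from pointwise surjectivity to a module equality: it is where smoothness of $X$ (local freeness of the tangent sheaf) and the coherent/Noetherian framework are indispensable, whereas the earlier steps are essentially formal, provided one is careful that pushforward genuinely preserves completeness and that the fiber of the vector-field module is correctly identified with the tangent space.
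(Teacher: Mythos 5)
The paper does not actually prove this statement---it is imported verbatim from \cite{denscrit} as a known criterion---but your argument is correct and follows essentially the same route as the original proof there: saturate $L$ under pushforward by $\AutA(X)$ (noting that pushforward by an algebraic automorphism preserves completeness and hence $\mathrm{Lie}_{\mathrm{alg}}(X)$), use the generating-set hypothesis together with homogeneity to get $N(x)=T_xX$ at every point, and then conclude $N=\mathrm{VF}_{\mathrm{alg}}(X)$ by Nakayama applied to the finitely generated quotient of the coherent tangent module. I see no gaps; in particular you correctly identify the passage from fiberwise surjectivity to module equality as the step where smoothness and the Noetherian/coherent framework are genuinely used.
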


The analogous statement in the holomorphic case follows with essentially the same proof:

\begin{theorem}
\label{modulehol}
Let $X$ be a Stein manifold, homogeneous w.r.t.\ $\mathop{\mathrm{Aut}_{\mathrm{hol}}}\! X$\!,
with algebra of holomorphic functions $\hol(X)$, and $L$ be a submodule of the $\hol(X)$-module of all holomorphic vector fields such that $L \subseteq \overline{\mathrm{Lie}_{\mathrm{hol}}(X)}$.
Suppose that the fiber of $L$ over some $x_0 \in X$ contains a generating set.
Then $X$ has the density property.
\end{theorem}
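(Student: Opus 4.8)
The plan is to run the proof of Theorem~\ref{modulealg} essentially verbatim, substituting for the two features special to the affine algebraic setting their Stein counterparts: all inclusions are now read inside the compact--open closure $\overline{\mathrm{Lie}_{\mathrm{hol}}(X)}$ (since we only want density, not equality), and the Noetherian global--generation step at the end is replaced by an Oka--Weil approximation argument on the coherent, locally free tangent sheaf $\Theta_X$. The target is to exhibit an $\hol(X)$--submodule $N\subseteq\overline{\mathrm{Lie}_{\mathrm{hol}}(X)}$ with $N_x=T_xX$ for every $x\in X$; from this the density property will follow.

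First I would record the invariance that makes the argument go. For $\alpha\in\mathrm{Aut}_{\mathrm{hol}}(X)$ the pushforward $\alpha_*$ preserves completeness of a holomorphic field and commutes with brackets and $\CC$--linear combinations, so $\alpha_*\,\mathrm{Lie}_{\mathrm{hol}}(X)=\mathrm{Lie}_{\mathrm{hol}}(X)$; being a homeomorphism of $\mathrm{VF}_{\mathrm{hol}}(X)$ for the compact--open topology, $\alpha_*$ also fixes the closure. Now let $N$ be the $\hol(X)$--submodule generated by the orbit $\{\alpha_*\nu:\alpha\in\mathrm{Aut}_{\mathrm{hol}}(X),\ \nu\in L\}$. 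The key point is that $N\subseteq\overline{\mathrm{Lie}_{\mathrm{hol}}(X)}$: for $f\in\hol(X)$ and $\nu\in L$ one has $f\nu\in L\subseteq\overline{\mathrm{Lie}_{\mathrm{hol}}(X)}$ because $L$ is a module, hence $\alpha_*(f\nu)=(f\circ\alpha^{-1})\,\alpha_*\nu$ lies in the closure by the invariance; letting $f$ range over $\hol(X)$ shows $g\,(\alpha_*\nu)\in\overline{\mathrm{Lie}_{\mathrm{hol}}(X)}$ for every $g\in\hol(X)$, and taking finite sums (the closure is a $\CC$--subspace) gives all of $N$. It is precisely here that one uses that $L$ is a module rather than a mere subset: this is what lets the multiplication by functions be absorbed while never leaving the non--module space $\overline{\mathrm{Lie}_{\mathrm{hol}}(X)}$.

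Next I would compute fibers. If the generating set in the fiber of $L$ over $x_0$ is $\{\nu_1(x_0),\dots,\nu_k(x_0)\}$ with $\nu_i\in L$, then for $\alpha$ in the isotropy group of $x_0$ we have $(\alpha_*\nu_i)(x_0)=d\alpha_{x_0}(\nu_i(x_0))$, so $N_{x_0}$ contains the whole isotropy orbit of the generating set and hence equals $T_{x_0}X$. Since $\beta_*(\alpha_*\nu)=(\beta\alpha)_*\nu$, the module $N$ is invariant under every $\beta_*$; by homogeneity, choosing $\beta$ with $\beta(x_0)=x$ yields $N_x\supseteq d\beta_{x_0}(N_{x_0})=T_xX$. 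Thus $N$ has full fibers everywhere.

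Finally, the subsheaf of $\Theta_X$ generated by $N$ has full stalks by Nakayama and therefore equals $\Theta_X$; on any $\hol(X)$--convex compact $K$ one may write a given field, with holomorphic coefficients on a Stein neighborhood of $K$, as a combination of finitely many elements of $N$ and then, by Oka--Weil approximation, replace these coefficients by functions in $\hol(X)$, so that $N$ is dense in $\mathrm{VF}_{\mathrm{hol}}(X)$. Then $\mathrm{VF}_{\mathrm{hol}}(X)=\overline{N}\subseteq\overline{\mathrm{Lie}_{\mathrm{hol}}(X)}$, which is the density property. I expect this last paragraph to be the only real obstacle: in the affine algebraic proof the analogous submodule equals $\mathrm{VF}_{\mathrm{alg}}(X)$ outright, by Noetherian finiteness applied to the quotient with empty support, whereas in the Stein case full fibers give only density of $N$ (a submodule generating every stalk need not contain a finite global generating family), so one must genuinely invoke coherence and Oka--Weil approximation to close the gap; the rest of the argument transfers once completeness and the compact--open closure are handled as above.
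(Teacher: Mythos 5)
Your proposal is correct and follows exactly the route the paper intends: the paper gives no separate proof of Theorem~\ref{modulehol}, asserting only that it follows ``with essentially the same proof'' as Theorem~\ref{modulealg} from \cite{denscrit}, and your argument --- the $\Aut_{\mathrm{hol}}$-orbit submodule contained in $\overline{\mathrm{Lie}_{\mathrm{hol}}(X)}$ via the module property of $L$, full fibres everywhere by homogeneity, and the replacement of the Noetherian global-generation step by coherence, Nakayama on stalks and Oka--Weil approximation on $\hol(X)$-convex compacts --- is precisely that adaptation. Your closing remark correctly identifies the only point where the Stein setting genuinely differs from the affine algebraic one, namely that one obtains density of the submodule rather than equality, which is all the density property requires.
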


\begin{remark}
\label{overshearcomplete}
We remark that given a complete vector field $\Theta$ and a holomorphic function $f$ on a complex manifold $X$, the vector field $f \cdot \Theta$ is complete as well if and only if $\Theta(\Theta(f)) = 0$, see \cite{shears} and also \cite{fibred}*{Lemma 3.3} for an explicit formula.
\end{remark}

\section{Coordinates and complete fields}
\label{seccoordinates}

We shall often make use of the following three coordinate neighbourhoods, each of which is dense in the surface. For our calculations it will be convenient to know the push-forward of a vector field.

\begin{align*}
\CC \times \CC^\ast \ni &(x, y) \mathop{\mapsto}^\varphi \left( x, y, \frac{x P(x)}{y}, \frac{P(x)}{y} \cdot Q\left( \frac{x P(x)}{y} \right) \right) \in S_{P,Q} \\
&\frac{\partial}{\partial x} \mapsto \frac{\partial}{\partial x} + \left( \frac{x P'(x)}{y} + \frac{P(x)}{y} \right) \frac{\partial}{\partial u} \\ & \qquad\quad + \left( \frac{P'(x) Q(u)}{y} + \frac{u P'(x) Q'(u)}{y} + \frac{P^2(x)Q'(u)}{y^2} \right) \frac{\partial}{\partial v} \\
&\frac{\partial}{\partial y} \mapsto \frac{\partial}{\partial y} - \frac{u}{y} \frac{\partial}{\partial u}
- \left(\frac{v}{y} + \frac{u P(x) Q'(u)}{y^2}\right) \frac{\partial}{\partial v}
\end{align*}

\bigskip

\begin{align*}
\CC \times \CC^\ast \ni &(u, v) \mathop{\mapsto}^\psi \left( \frac{u Q(u)}{v}, \frac{Q(u)}{v} \cdot P\left( \frac{u Q(u)}{v} \right), u, v \right) \in S_{P,Q} \\
&\frac{\partial}{\partial u} \mapsto \frac{\partial}{\partial u} + \left( \frac{u Q'(u)}{v} + \frac{Q(u)}{v} \right) \frac{\partial}{\partial x} \\ & \qquad\quad + \left( \frac{Q'(u) P(x)}{v} + \frac{u Q'(u) P'(x)}{v} + \frac{Q^2(u)P'(x)}{v^2} \right) \frac{\partial}{\partial y} \\
&\frac{\partial}{\partial v} \mapsto \frac{\partial}{\partial v} - \frac{x}{v} \frac{\partial}{\partial x}
- \left(\frac{y}{v} + \frac{x Q(u) P'(x)}{v^2}\right) \frac{\partial}{\partial y}
\end{align*}

\bigskip

\begin{align*}
\CC^\ast \times \CC^\ast \ni (x, u) &\mathop{\mapsto}^\chi \left(x, \frac{x P(x)}{u}, u, \frac{u Q(u)}{x}\right) \in S_{P,Q} \\
\frac{\partial}{\partial x} &\mapsto \frac{\partial}{\partial x}
 + \frac{P(x) + x P'(x)}{u} \frac{\partial}{\partial y}
 - \frac{v}{x} \frac{\partial}{\partial v} \\
\frac{\partial}{\partial u} &\mapsto \frac{\partial}{\partial u}
 - \frac{y}{u} \frac{\partial}{\partial y}
 + \frac{Q(u) + u Q'(u)}{x} \frac{\partial}{\partial v}
\end{align*}

\begin{lemma}
\label{easycomplete}
The following vector fields extend to vector fields on $S_{P,Q}$ and are $\CC$-complete:	
\[
\begin{split}
\varphi_\ast\left(y^2 \frac{\partial}{\partial x} \right), \;
\varphi_\ast\left( x y \frac{\partial}{\partial x} \right), \;
\varphi_\ast\left( x y \frac{\partial}{\partial y} \right), \\
\psi_\ast\left( v^2 \frac{\partial}{\partial u} \right), \;
\psi_\ast\left( u v \frac{\partial}{\partial u} \right), \;
\psi_\ast\left( u v \frac{\partial}{\partial v} \right), \\
\chi_\ast\left( x u \frac{\partial}{\partial x} \right), \;
\chi_\ast\left( x u \frac{\partial}{\partial u} \right)
\end{split}
\]
\end{lemma}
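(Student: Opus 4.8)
The plan is to establish the two assertions — that each listed field extends to all of $S_{P,Q}$, and that the extension is $\CC$-complete — separately, after first halving the work with the involution $\sigma\colon (x,y,u,v)\mapsto(u,v,x,y)$. One checks directly from \eqref{eqsurface} that $\sigma$ is an isomorphism $S_{P,Q}\to S_{Q,P}$ intertwining $\varphi$ with $\psi$ and interchanging the two fields carried by the $\chi$-chart. Since $P$ and $Q$ range over all non-constant polynomials (resp.\ holomorphic functions), it therefore suffices to treat the four representatives $\varphi_\ast(y^2\,\partial/\partial x)$, $\varphi_\ast(xy\,\partial/\partial x)$, $\varphi_\ast(xy\,\partial/\partial y)$ and $\chi_\ast(xu\,\partial/\partial x)$; the remaining four fields are obtained from these by applying $\sigma$.

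For the extension I would substitute each field into the push-forward formulas recorded above and rewrite every coefficient as the restriction of a polynomial on $\CC^4$, clearing the apparent denominators by means of the defining relations. For example, $xP(x)=yu$ gives $xP(x)/y=u$, which turns the a priori meromorphic $\partial/\partial v$-coefficient $xuP(x)Q'(u)/y$ of $\varphi_\ast(xy\,\partial/\partial y)$ into $u^2 Q'(u)$. Once all coefficients are polynomial, the resulting ambient field on $\CC^4$ is tangent to $S_{P,Q}$ along the dense chart; as tangency is the vanishing of the field on the defining ideal — a closed, hence everywhere-valid condition on the irreducible surface — the field extends regularly across the boundary curve to all of $S_{P,Q}$, and is in particular algebraic.

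For completeness I would integrate each field explicitly on its chart and then verify, again via the surface relations, that the flow extends to all of $S_{P,Q}$ for every $t\in\CC$. The additive field $\varphi_\ast(y^2\,\partial/\partial x)$ integrates to $\varphi(x,y)\mapsto\varphi(x+ty^2,y)$; with $g(x):=xP(x)$ the expansion $g(x+ty^2)/y=u+\sum_{k\ge 1}\frac{1}{k!}g^{(k)}(x)\,t^k y^{2k-1}$ exhibits the new $u$-coordinate (and similarly $v$) as a polynomial in $t$ with regular coefficients, so this is an algebraic $\Ga$-action whose infinitesimal generator is a locally nilpotent vector field. The three multiplicative fields integrate to $\varphi(x,y)\mapsto\varphi(xe^{ty},y)$, $\varphi(x,y)\mapsto\varphi(x,ye^{tx})$ and $\chi(x,u)\mapsto\chi(xe^{tu},u)$; writing $\theta:=x\,\partial/\partial x$, the analogous expansion $g(xe^{ty})/y=u+\sum_{m\ge 1}\frac{t^m y^{m-1}}{m!}\,\theta^m g(x)$ is now an entire power series in $t$ whose coefficients are again regular on $S_{P,Q}$, so the flow extends holomorphically across the boundary and is defined for all complex times. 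These latter flows are genuinely non-algebraic, which is why their completeness must be verified directly rather than by exhibiting a locally nilpotent derivation.

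The real obstacle is exactly this boundary verification for completeness. The delicate coordinate is the last one (the $v$-, resp.\ $y$-coordinate), where no single substitution clears the denominator; the clean way around this is to expand the flow in powers of $t$ and recognize each coefficient as an iterate of the already-extended field applied to a coordinate function, so that regularity of the field forces regularity of the whole flow. Granting this, the maps $\Phi_t$ form a one-parameter subgroup of $\Aut_{\mathrm{hol}}(S_{P,Q})$ and each of the eight fields is $\CC$-complete.
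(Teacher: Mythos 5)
Your proposal is correct and follows essentially the same route as the paper: extension is obtained by clearing denominators in the explicit push-forward formulas via the defining equations, and completeness by integrating each field explicitly on its dense chart and checking that the flow extends across the boundary (the paper simply asserts this last extension is "easy to see", whereas you supply the power-series argument, and you additionally halve the casework with the symmetry $(x,y,P)\leftrightarrow(u,v,Q)$ that the paper itself invokes elsewhere).
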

\begin{proof}
Using the explicit formulae given above and the defining equations \eqref{eqsurface} of $S_{P,Q}$ it is immediate that these vector fields extend polynomially resp.\ holomorphically to $S_{P, Q}$.

In the coordinates given by $\varphi$ it is straightforward that the flow maps of $y^2 \frac{\partial}{\partial x}$ resp.\ $x y \frac{\partial}{\partial x}$ resp.\ $x y \frac{\partial}{\partial y}$ are given by $(x, y, t) \mapsto (x + y^2 t, y)$ resp.\ $(x, y, t) \mapsto (\exp(y t) x, y)$ resp.\ $(x, y, t) \mapsto (x, \exp(x t) y)$ and exist for all $t \in \CC$. Since the vector fields extend from the open and dense set $\varphi(\CC \times \CC^\ast)$ to $S_{P,Q}$, it is easy to see that the flow maps extend as complete flow maps to $S_{P,Q}$. Similarly for the vector fields given in the coordinates $\psi$ and $\chi$.
\end{proof}

\begin{remark}If $P(0)=0$, then one checks that the  vector field $\varphi_\ast\left(y \frac{\partial}{\partial x} \right)$  extends to the following locally nilpotent derivation on $S_{P,Q}$:  
\begin{align*}
\varphi_\ast\left(y \frac{\partial}{\partial x} \right)=y\frac{\partial}{\partial x}&+\left(P(x)+xP'(x)\right)\frac{\partial}{\partial u}+\\&+\left(P'(x)Q(u)+P'(x)uQ'(u)+\frac{P(x)}{x}uQ'(u)\right)\frac{\partial}{\partial v}
\end{align*}

Similarly, if $Q(0)=0$ then $\psi_\ast\left(v \frac{\partial}{\partial u} \right)$ extends to a locally nilpotent derivation on $S_{P,Q}$.
\end{remark}

\section{Transitivity}
\label{sectrans}
In this section we show that the automorphism group acts transitively on $S_{P,Q}$ in the smooth case. We first consider the holomorphic and then the algebraic situation.

\begin{proposition}
\label{prop:holotrans}
Let $P$ and $Q$ be holomorphic functions with only simple zeros and assume further that either $P(0) \neq 0$ or $Q(0) \neq 0$. Then the subgroup of holomorphic automorphism generated by the following complete vector fields acts transitively on the complex surface $S_{P,Q}$.
\[
\begin{split}
\chi_\ast\left( x u \frac{\partial}{\partial x} \right), \;
\chi_\ast\left( x u \frac{\partial}{\partial u} \right),
\varphi_\ast\left(y^2 \frac{\partial}{\partial x} \right), \\
\varphi_\ast\left( x y \frac{\partial}{\partial x} \right), \;
\psi_\ast\left( v^2 \frac{\partial}{\partial u} \right), \;
\psi_\ast\left( u v \frac{\partial}{\partial u} \right), \;
\end{split}
\]
\end{proposition}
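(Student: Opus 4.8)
The plan is to prove that the group $G$ generated by the flows of the six listed fields acts with a single orbit, by showing that every point of $S_{P,Q}$ can be moved by $G$ into the ``big torus'' chart $\chi(\CC^\ast\times\CC^\ast)=\{x\neq 0,\ u\neq 0\}$, on which $G$ is already transitive. By the evident symmetry $x\leftrightarrow u$, $y\leftrightarrow v$, $P\leftrightarrow Q$ of the equations \eqref{eqsurface}, which interchanges the charts $\varphi$ and $\psi$ and the two $\chi$-fields, I may assume without loss of generality that $P(0)\neq 0$. For transitivity on the torus chart, Lemma~\ref{easycomplete} and the push-forward formulae of Section~\ref{seccoordinates} give the flows $(x,u)\mapsto(e^{ut}x,u)$ of $\chi_\ast(xu\,\partial_x)$ and $(x,u)\mapsto(x,e^{xt}u)$ of $\chi_\ast(xu\,\partial_u)$; since $x,u\in\CC^\ast$, the factors $e^{ut}$ and $e^{xt}$ sweep out all of $\CC^\ast$, so composing the two flows carries any $(x_0,u_0)$ to any $(x_1,u_1)$ in $\CC^\ast\times\CC^\ast$.

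Next I would reach the torus chart from the charts $\varphi$ and $\psi$. A point of $\varphi(\CC\times\CC^\ast)$ has $y\neq 0$; applying the flow $(x,y)\mapsto(x+y^2t,y)$ of $\varphi_\ast(y^2\,\partial_x)$ moves $x$ along the whole affine line $\{x+y^2t:t\in\CC\}=\CC$. As $P\not\equiv 0$, the set of $t$ with $x+y^2t=0$ or $P(x+y^2t)=0$ is a proper discrete subset of $\CC$, and choosing $t$ outside it produces a point with $x\neq 0$ and $u=xP(x)/y\neq 0$, hence in the torus chart. The chart $\psi$ is treated identically via $\psi_\ast(v^2\,\partial_u)$.

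The crux is the boundary locus lying in no chart, namely the points with $y=v=0$. Under the assumption $P(0)\neq 0$, the equations \eqref{eqsurface} force such a point to be of one of two types: $(x_0,0,0,0)$ with $P(x_0)=0$ (so $x_0\neq 0$), or $(0,0,u_0,0)$ with $Q(u_0)=0$ (the value $u_0=0$ occurring only when $Q(0)=0$). For each type I would exhibit one of the six fields whose global holomorphic extension to $S_{P,Q}$, computed from the push-forward formulae, is \emph{nonzero} at the point and whose flow pushes it off the boundary into a chart. Concretely, at $(x_0,0,0,0)$ the field $\chi_\ast(xu\,\partial_x)$ extends to $x_0^2P'(x_0)\,\partial_y$, nonzero since the zero $x_0$ of $P$ is simple, so its flow gives $y\neq 0$ and lands in $\varphi$; at $(0,0,u_0,0)$ with $u_0\neq 0$ the field $\psi_\ast(uv\,\partial_u)$ extends to a field with nonvanishing $\partial_x$-component $u_0^2Q'(u_0)$, landing in the torus chart; and at the origin (when $Q(0)=0$) the field $\varphi_\ast(y^2\,\partial_x)$ extends to $P(0)^2Q'(0)\,\partial_v\neq 0$, landing in $\psi$. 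In every case the non-vanishing rests on $P$ and $Q$ having only simple zeros. Combining the three steps, every point lies in the $G$-orbit of the torus chart, giving transitivity.

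I expect the third step to be the main obstacle: precisely because the boundary points are covered by no chart, the simple flow formulae of Lemma~\ref{easycomplete} either degenerate to the identity on $\{y=v=0\}$ or are undefined there, so one cannot argue in the chart coordinates and must instead pass to the global holomorphic extensions of the fields and evaluate them at these points directly. Verifying that the relevant extension is nonzero in a direction transverse to the boundary — so that its flow escapes into a chart — is exactly the place where the simplicity of the zeros of $P$ and $Q$ (equivalently, smoothness of $S_{P,Q}$) and the hypothesis $P(0)\neq 0$ or $Q(0)\neq 0$ are used.
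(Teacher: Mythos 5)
Your proof is correct and follows essentially the same route as the paper: reduce everything to the transitive action on the torus chart $\{x\neq 0,\ u\neq 0\}$ and dispose of the remaining points by evaluating the globally extended complete fields there, with your computations at $(x_0,0,0,0)$, $(0,0,u_0,0)$ and the origin matching the paper's (the paper uses $\varphi_\ast\bigl(xy\,\tfrac{\partial}{\partial x}\bigr)$ where you use $\chi_\ast\bigl(xu\,\tfrac{\partial}{\partial x}\bigr)$, but both work). One harmless slip: the locus covered by no chart is $\{y=v=0\}\cap(\{x=0\}\cup\{u=0\})$ rather than all of $\{y=v=0\}$, since points with $y=v=0$ but $x,u\neq 0$ (a zero of $P$ paired with a zero of $Q$) do occur, yet they lie in the $\chi$-chart and are therefore already handled by your first step.
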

\begin{proof}
Without loss of generality we may assume that $P(0) \neq 0$ due the ``formal'' symmetry $(x,y,P) \leftrightarrow (u,v,Q)$.
The flow maps of $x u \frac{\partial}{\partial x}$ and $x u \frac{\partial}{\partial u}$ together act transitively on $\CC^\ast_x \times \CC^\ast_u$.
We only need to consider the following cases, where we need to move the points inside the coordinates given by $\chi$:
\begin{enumerate}
\item $x \neq 0, u = 0 \Longrightarrow v = 0, P(x) = 0, y \in \CC$

Here, the vector field $\varphi_\ast\left( y^2 \frac{\partial}{\partial x} \right)$ takes the following form:
\[
y^2 \frac{\partial}{\partial x} + x y P'(x) \frac{\partial}{\partial u} + y P'(x) Q(0) \frac{\partial}{\partial v}
\]
Except for $y = 0$ we can change the $u$-coordinate while keeping $x \neq 0$.
If $y = 0$, then we consider
\[
\varphi_\ast\left( xy \frac{\partial}{\partial x} \right) = x^2 P'(x) \frac{\partial}{\partial u} + x P'(x) Q(0) \frac{\partial}{\partial v}
\]
Since $P(x) = 0$, we have $P'(x) \neq 0$ and can change the $u$-coordinate while keeping $x \neq 0$.
\item $x = 0, u \neq 0$: similarly with $\psi_\ast\left( v^2 \frac{\partial}{\partial u} \right)$ for $v \neq 0$ and with  $\psi_\ast\left( u v \frac{\partial}{\partial u} \right)$ for $v = 0$.
\item $x=0, u=0$: The vector fields take the following form:
\begin{align*}
\qquad \varphi_\ast\left( y^2 \frac{\partial}{\partial x} \right) &= y^2 \frac{\partial}{\partial x} + y P(0) \frac{\partial}{\partial u} + \left( y P'(0) Q(0) + P^2(0) Q'(0) \right) \frac{\partial}{\partial v} \\
\qquad \varphi_\ast\left( v^2 \frac{\partial}{\partial u} \right) &= v^2 \frac{\partial}{\partial u} + v Q(0) \frac{\partial}{\partial x} + \left( v Q'(0) P(0) + Q^2(0) P'(0) \right) \frac{\partial}{\partial y}
\end{align*}
Hence we can change the $x$-coordinate by $\varphi_\ast\left( y^2 \frac{\partial}{\partial x} \right)$ except for $y = 0$ and the $u$-coordinate by $\varphi_\ast\left( v^2 \frac{\partial}{\partial u} \right)$ except for $v = 0$.
It remains to take care of $(0,0,0,0)$, where necessarily $Q(0) = 0$ and $Q'(0) \neq 0$. Therefore,
\begin{align*}
\left.\varphi_\ast\left( y^2 \frac{\partial}{\partial x} \right)\right|_{(0,0,0,0)} &= P^2(0) Q'(0) \frac{\partial}{\partial v}
\end{align*}
is non-vanishing and we are done. \qedhere
\end{enumerate}
\end{proof}

Next we deal with the algebraic situation.
Observe that the flow maps of the locally nilpotent derivations $\varphi_\ast\left(y^2 \frac{\partial}{\partial x} \right)$ and $\psi_\ast\left( v^2 \frac{\partial}{\partial u} \right)$, given in Lemma \ref{easycomplete}, act transitively on the open subset $\{(x,y,u,v)\in S_{P,Q}\mid y\neq 0\text{ or }v\neq 0\}$. It thus only remains to consider a finite number of points, namely the points of the  subset
\[\Lambda_{P,Q}=\{p_0=(x_0,0,u_0,0)\mid x_0P(x_0)=u_0Q(u_0)=0\}\subset S_{P,Q},\]
and the question is now to decide, whether there exists, given a point $p_0\in\Lambda_{P,Q}$, an algebraic  $\CC_+$-action on $S_{P,Q}$ whose flow map moves $p_0$ outside of $\Lambda_{P,Q}$. Note that this would be the case if we can find another surface $S_{\widetilde{P},\widetilde{Q}}$ and an isomorphism $\theta \colon S_{P,Q} \to S_{\widetilde{P},\widetilde{Q}}$ such that $\theta(p_0)\notin\Lambda_{\widetilde{P},\widetilde{Q}}$.

As observed in \cite{alghuge}, if $P(0)\neq0$, then the surface $S_{P,Q}$ is isomorphic to every surface $S_{P,\widetilde{Q}}$, where  $\widetilde{Q}$ is defined by $\widetilde{Q}(u)=Q(u-\lambda)$ for some constant $\lambda\in\CC$. In the sequel, we will use these isomorphisms, that we can actually give explicitly.

\begin{lemma}\label{lemma:explicitIso}
Let $P\in\CC[x]$, $Q\in\CC[u]$ and  $\lambda \in \CC$. Set
 \[\widetilde{Q}(u) = Q(u - \lambda P(0))\]
 and define an algebraic automorphism $\Theta$ of $\CC^4$ by
\[\begin{pmatrix}x\\y\\u\\v\end{pmatrix}\mapsto\begin{pmatrix}X:=x+\lambda y\\y\\
U:=u+\dfrac{XP(X)-xP(x)}{y}\\v+\dfrac{P(X)\widetilde{Q}(U)-P(x)\widetilde{Q}(\overline{U})}{y}+u\dfrac{\widetilde{Q}(\overline{U})-Q(u)}{x}\end{pmatrix},\]
where $\overline{U}$ denotes the polynomial $\overline{U}=u+\lambda(P(x)+xP'(x))$.
Then, $\Theta$ induces an isomorphism  $\theta:S_{P,Q}\to S_{P,\widetilde{Q}}$.
\end{lemma}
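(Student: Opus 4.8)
The plan is to first verify that the formula for $\Theta$ genuinely defines a polynomial automorphism of $\CC^4$, and then to show that it carries $S_{P,Q}$ bijectively onto $S_{P,\widetilde{Q}}$.

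For the well-definedness I would note that, although $U$ and $V$ are written with denominators $y$ and $x$, the three relevant numerators are in fact divisible by them. Writing $X=x+\lambda y$, the polynomial $XP(X)-xP(x)$ vanishes on $\{y=0\}$ (there $X=x$), hence is divisible by $y$; I would record the quotient $g(x,y):=(XP(X)-xP(x))/y$ and compute $g(x,0)=\tfrac{d}{dy}\big[(x+\lambda y)P(x+\lambda y)\big]_{y=0}=\lambda(P(x)+xP'(x))$, so that $U|_{y=0}=u+g(x,0)=\overline{U}$. Thus $\overline{U}$ is precisely the restriction of $U$ to $\{y=0\}$, which is exactly why it is introduced. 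Consequently $P(X)\widetilde{Q}(U)-P(x)\widetilde{Q}(\overline{U})$ also vanishes on $\{y=0\}$ and is divisible by $y$. Finally $\overline{U}|_{x=0}=u+\lambda P(0)$, and from the definition $\widetilde{Q}(u)=Q(u-\lambda P(0))$ one gets $\widetilde{Q}(\overline{U})|_{x=0}=Q(u)$, so $\widetilde{Q}(\overline{U})-Q(u)$ is divisible by $x$. These three divisibilities show $U,V\in\CC[x,y,u,v]$, and since $\Theta$ is triangular in the order $(y,x,u,v)$ it is a polynomial automorphism of $\CC^4$ with polynomial inverse.

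Next I would check the three defining equations of $S_{P,\widetilde{Q}}$, namely $yU=XP(X)$, $XV=U\widetilde{Q}(U)$ and $yV=P(X)\widetilde{Q}(U)$, for $(x,y,u,v)\in S_{P,Q}$. The first is immediate: $yU=yu+(XP(X)-xP(x))=XP(X)$ using $yu=xP(x)$. For the third, multiplying $V$ by $y$ and using $yv=P(x)Q(u)$ together with $yu/x=P(x)$ collapses the expression to $P(X)\widetilde{Q}(U)$, since the two $P(x)\widetilde{Q}(\overline{U})$ terms cancel and the two $P(x)Q(u)$ terms cancel. Rather than expanding the middle equation directly, I would deduce it on the dense open set $\{y\neq0\}$: there $U=XP(X)/y$ and $V=P(X)\widetilde{Q}(U)/y$ by the first and third equations, whence $XV=XP(X)\widetilde{Q}(U)/y=U\widetilde{Q}(U)$. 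Since $\{y\neq0\}$ is dense in the irreducible surface $S_{P,Q}$ (it is the image of $\varphi$) and the middle equation is a closed condition, it holds on all of $S_{P,Q}$.

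Finally, to upgrade the inclusion $\Theta(S_{P,Q})\subseteq S_{P,\widetilde{Q}}$ to an isomorphism, I would argue that $\Theta\in\Aut(\CC^4)$ sends the closed irreducible surface $S_{P,Q}$ to a closed irreducible surface contained in the irreducible surface $S_{P,\widetilde{Q}}$ of the same dimension, forcing equality; the restriction $\theta$ is then bijective with inverse the restriction of $\Theta^{-1}$, hence an isomorphism. Alternatively one checks that the analogous map built from $-\lambda$, which returns $\widetilde{Q}$ to $Q$ because $\widetilde{Q}(u+\lambda P(0))=Q(u)$, furnishes $\theta^{-1}$ explicitly. I expect the only genuinely laborious point to be the middle equation $XV=U\widetilde{Q}(U)$; the density argument is designed precisely to bypass grinding through it, and every other identity is short once the three divisibility facts above are in place.
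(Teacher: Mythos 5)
Your proof is correct and follows the same route as the paper, whose entire proof is the single sentence ``The proof is a straight-forward calculation''; you have simply carried that calculation out, and your key organizing observations (that $\overline{U}$ is exactly $U|_{y=0}$, which yields the three divisibilities, that $\Theta$ is triangular in the order $(y,x,u,v)$, and that the middle equation $XV=U\widetilde{Q}(U)$ follows from the other two on the dense chart $\{y\neq 0\}$) are all sound. The only hypotheses you rely on beyond pure computation --- density of the $\varphi$-chart and irreducibility of $S_{P,\widetilde{Q}}$ --- are asserted in Section \ref{seccoordinates} of the paper, so nothing is missing.
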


\begin{proof} The proof is a straight-forward calculation.
\end{proof}

\begin{proposition}
\label{prop:algtrans}
Let $P,Q$ be polynomials.
If both $P$ and $Q$ have only simple zeros, then the subgroup of $\Aut_{alg}(S_{P,Q})$ generated by all algebraic $\CC_+$-actions on $S_{P,Q}$ acts transitively on the smooth locus $(S_{P,Q})_{\textrm{reg}}$.
\end{proposition}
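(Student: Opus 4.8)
The plan is to proceed exactly as in the holomorphic transitivity proof (Proposition \ref{prop:holotrans}), reducing to a finite set of bad points and then using explicitly constructed automorphisms to move each bad point into the good locus. Concretely, I would first observe, as stated in the paragraph preceding the statement, that the two locally nilpotent derivations $\varphi_\ast(y^2 \frac{\partial}{\partial x})$ and $\psi_\ast(v^2 \frac{\partial}{\partial u})$ from Lemma \ref{easycomplete} have flows that already act transitively on the open set $\{y \neq 0\} \cup \{v \neq 0\}$. This is because, in the $\varphi$-coordinates $(x,y)$, the flow of $y^2\frac{\partial}{\partial x}$ is $(x,y) \mapsto (x + y^2 t, y)$, which sweeps out the whole $x$-line whenever $y \neq 0$; symmetrically for $v \neq 0$. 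Hence only the finite set $\Lambda_{P,Q}$ of points with $y = v = 0$ (and $x_0 P(x_0) = u_0 Q(u_0) = 0$) remains, and the task reduces to exhibiting, for each such $p_0$, an algebraic $\CC_+$-action whose flow carries $p_0$ off $\Lambda_{P,Q}$.

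The key tool for this is Lemma \ref{lemma:explicitIso}, which gives explicit isomorphisms $\theta \colon S_{P,Q} \to S_{P,\widetilde{Q}}$ with $\widetilde{Q}(u) = Q(u - \lambda P(0))$. The strategy is: to move a given $p_0 \in \Lambda_{P,Q}$, I compose with such an isomorphism (or its analogue under the $(x,y,P)\leftrightarrow(u,v,Q)$ symmetry), chosen so that the translated polynomial $\widetilde{Q}$ no longer vanishes at the relevant coordinate, i.e.\ so that $\theta(p_0) \notin \Lambda_{P,\widetilde{Q}}$. Once $p_0$ is moved to a point with nonzero $y$ or $v$ coordinate, the transitive flows on the good locus finish the job. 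I would organize the case analysis by the values of $x_0, u_0$ and whether $P(0)$ or $Q(0)$ vanishes, mirroring the three cases of Proposition \ref{prop:holotrans}: the cases $x_0 \neq 0, u_0 = 0$ and $x_0 = 0, u_0 \neq 0$ are handled by one surface, and the delicate case is the origin-type point where both $P$ and $Q$ may vanish simultaneously. The smoothness hypothesis (simple zeros, not both $P,Q$ vanishing at $0$) is what guarantees the derivatives $P'(x_0), Q'(u_0)$ are nonzero at zeros, which is precisely what makes the relevant flow directions non-degenerate.

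The main obstacle I anticipate is the case $P(0) = 0$ (equivalently the point $(0,0,0,0)$ when it lies on the surface), since then the translation $\widetilde{Q}(u) = Q(u - \lambda P(0)) = Q(u)$ is trivial and Lemma \ref{lemma:explicitIso} produces no movement. In that situation one must use the \emph{other} family of isomorphisms coming from the symmetry, namely translating $P$ via $\widetilde{P}(x) = P(x - \mu Q(0))$, which is only useful when $Q(0) \neq 0$; the smoothness condition that $P$ and $Q$ do not simultaneously vanish at $0$ guarantees that at least one of these two translation families is nontrivial. Combining the two families should cover $(0,0,0,0)$, using a translation in the nontrivial variable to displace the point and then applying the good-locus flows. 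The bookkeeping of checking that each chosen $\lambda$ or $\mu$ actually lands $\theta(p_0)$ outside the (generally different) bad set $\Lambda_{P,\widetilde{Q}}$ is the part requiring care, but it is a finite and explicit verification rather than a conceptual difficulty.
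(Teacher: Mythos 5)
Your overall skeleton matches the paper's: reduce to the finite set $\Lambda_{P,Q}$ via the two locally nilpotent derivations, then use the explicit isomorphisms of Lemma \ref{lemma:explicitIso} to push each point of $\Lambda_{P,Q}$ into the locus $\{y\neq 0\}\cup\{v\neq 0\}$. But there is a concrete error at the heart of your case analysis: you claim that when $P(0)=0$ the lemma ``produces no movement'' because $\widetilde{Q}(u)=Q(u-\lambda P(0))=Q(u)$. Only the change of polynomial is trivial there; the map $\Theta$ itself, $x\mapsto x+\lambda y$, $u\mapsto u+\frac{XP(X)-xP(x)}{y}$, etc., is a genuinely nontrivial automorphism of $S_{P,Q}$ (it is essentially the time-$\lambda$ flow of the locally nilpotent derivation $\varphi_\ast(y\,\partial/\partial x)$), and on $\Lambda_{P,Q}$ it sends $(x_0,0,u_0,0)$ to a point with third coordinate $u_0+\lambda(P(x_0)+x_0P'(x_0))$ and an explicitly computable fourth coordinate. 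The paper's proof in fact \emph{begins} by using Lemma \ref{lemma:explicitIso} to normalize to $P(0)=0$, precisely so that $\theta$ becomes an automorphism of $S_{P,Q}$ itself and the displacement of $\Lambda_{P,Q}$ can be read off. Your dichotomy is therefore inverted: $P(0)=0$ is the favourable normalization, not the obstruction.

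This error propagates into the part you defer as ``bookkeeping.'' Your criterion for success is that the translated polynomial should not vanish at the new coordinate, but in the delicate case $x_0=0$ (not only the origin: any $(0,0,u_0,0)$) one has $P(x_0)+x_0P'(x_0)=P(0)=0$, so the third coordinate does not move at all and that criterion is unusable; likewise your proposed fallback of translating $P$ by $\mu Q(0)$ produces a new $x$-coordinate $X=\mu Q(0)$ at which $\widetilde{P}(X)=P(0)=0$ by construction, so the same criterion again fails for every $\mu$. What is actually needed, and what your proposal omits, is the explicit computation of the last coordinate of the image, $v_1=\lambda P'(0)\bigl(Q(u_0)+2u_0Q'(u_0)\bigr)$ in the paper's normalization, together with the observation that the simple-zeros hypothesis forces $P'(0)\neq 0$ and $Q(u_0)+2u_0Q'(u_0)\neq 0$ except at the singular point $(0,0,0,0)$. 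This is the step where the hypotheses genuinely enter, and without it the argument does not close.
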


\begin{proof}
Observe first that we can suppose that $P(0)$ and $Q(0)$ are not both non-zero. Indeed, if $Q(0)\neq0$, then there exists a $\lambda\in\CC$ such that $\widetilde{P}(0)=0$, where $\widetilde{P}$ denotes the polynomial $\widetilde{P}(x)=P(x-\lambda Q(0))$. Since $S_{P,Q}$ and $S_{\widetilde{P},Q}$ are isomorphic by Lemma \ref{lemma:explicitIso}, we can also suppose that $P(0)=0$.

Let $p_0=(x_0,0,u_0,0)$ be a point in $\Lambda_{P,Q}$. If $P(x_0)+x_0P'(x_0)\neq0$, then we can choose a constant $\lambda\in\CC$ such that $u_0+\lambda(P(x_0)+x_0P'(x_0))\neq0$ and $Q(u_0+\lambda( P(x_0)+x_0P'(x_0)))\neq0$. Therefore, Lemma \ref{lemma:explicitIso} gives us an isomorphism $\theta \colon S_{P,Q}\to S_{P,\widetilde{Q}}=S_{P,Q}$ such that $\theta(p_0)\notin\Lambda_{P,Q}$, because $u_0+\lambda(P(x_0)+x_0P'(x_0))$ is nothing but the third coordinate of $\theta(p_0)$. Consequently, there exists an additive group action on $S_{P,Q}$ which moves $p_0$ outside the set $\Lambda_{P,Q}$.

It remains to consider the case where $P(x_0)+x_0P'(x_0)=0$. Actually, it only remains the case $x_0=0$. Indeed,  it follows from the first defining equation of $S_{P,Q}$,  that $x_0P(x_0)=0$. But, $x_0\neq0$ would imply that $P(x_0)=0$, thus that $x_0P'(x_0)=P(x_0)+x_0P'(x_0)=0$, and finally that $x_0$ is a multiple root of $P$.

So, let $p_0=(0,0,u_0,0)$. We apply again the automorphism $\theta:S_{P,Q}\to S_{P,\widetilde{Q}}=S_{P,Q}$ given in Lemma \ref{lemma:explicitIso}. We will conclude the proof by showing that the image $p_1=\theta(p_0)$ of $p_0$ is not in $\Lambda_{P,Q}$. For this, it suffices to show that  the last coordinate of $p_1$ is not equal to zero. One checks that the coordinates of $p_1$ are $(x_1,y_1,u_1,v_1)$ with $x_1=x_0=0$, $y_1=y_0=0$, $u_1=u_0+\lambda(P(x_0)+x_0P'(x_0))=u_0$ and
\[v_1=\lambda P'(0)(Q(u_0)+2u_0Q'(u_0)).\]
Since $P(0)=0$ and $P$ has no multiple roots, we have that $P'(0)\neq0$. On the other hand, $u_0$ satisfies the equality $u_0Q(u_0)=0$. Therefore, we have three possibilities: either $u_0\neq0$ and $Q(u_0)=0$ (and in this case $v_1\neq0$ follows because $Q$ has only simple roots),  $u_0=0$ and $Q(u_0)\neq0$ (and in this cas again, we have $v_1\neq0$), or $u_0=0$ and $Q(0)=Q(u_0)=0$ (and in this case, $p_0=(0,0,0,0)$ was a singular point of $S_{P,Q})$.
\end{proof}

\begin{remark}Note that Proposition \ref{prop:algtrans} cannot be generalized to all surfaces $S_{P,Q}$. For instance, consider the Gizatullin surface $S$ defined by the equations 
\begin{equation*}
\left\{ \begin{array}{lcl}
yu & = & x^2 (x-1)^2 \\
xv & = & u^2 (u-1)^2 \\
yv & = & x(x-1)^2 u(u-1)^2.
\end{array} \right.
\end{equation*}
Then, the point $p=(1,0,1,0)\in S$ is  smooth and is fixed by all algebraic automorphisms of $S$. Indeed \cite{alghuge}*{Proposition 5.1.} implies that $\Aut_{alg}(S)$ is generated by the involution $(x,y,u,v)\mapsto(u,v,x,y)$ and the subgroup $J_y$ of automorphisms of $S$ preserving the $\mathbb{A}^1$-fibration $\text{pr}_y \colon S \to \mathbb{A}^1$. The latter consisting of the flow maps of all locally nilpotent derivations  $\varphi_\ast\left(yR(y) \frac{\partial}{\partial x} \right)$ with $R(y)\in\CC[y]$.
\end{remark}

\section{Generating a submodule}
\label{secsubmodule}

\begin{proposition}
\label{anideal}
Both in the algebraic and the holomorphic case there exists a non-trivial ideal $I \subset \hol(S_{P,Q})$ such that $I \varphi_\ast \left( \frac{\partial}{\partial y} \right)$ is contained in the Lie algebra generated by complete vector fields on $\hol(S_{P,Q})$.
\end{proposition}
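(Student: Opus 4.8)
The plan is to work in the $\varphi$-chart, where the target field $\varphi_\ast(\partial/\partial y)$ is simply $\partial/\partial y$ and has poles only along $\{y=0\}$; the goal is to exhibit, inside the Lie algebra generated by complete fields, vector fields of the form $c\cdot\varphi_\ast(\partial/\partial y)$ for $c$ running over generators of a nonzero ideal $I\subset\hol(S_{P,Q})$. Since smoothness forbids $P(0)=Q(0)=0$, Lemma \ref{lemma:explicitIso} together with the symmetry $(x,y,P)\leftrightarrow(u,v,Q)$ lets me assume $P(0)=0$, so that in addition to the fields of Lemma \ref{easycomplete} I have the complete locally nilpotent field $D:=\varphi_\ast(y\,\partial/\partial x)$ from the Remark. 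Throughout I use two facts: brackets of complete fields lie in the Lie algebra, and Remark \ref{overshearcomplete} produces replicas, e.g.\ $g(x)\,\varphi_\ast(xy\,\partial/\partial y)$, $h(y)\,\varphi_\ast(xy\,\partial/\partial x)$, $h(y)\,\varphi_\ast(y^2\,\partial/\partial x)$ and $x\,\varphi_\ast(y^2\,\partial/\partial x)$ are all complete (the relevant second directional derivative vanishes), and one checks via \eqref{eqsurface} that they extend regularly to $S_{P,Q}$.

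First I produce the seed $\partial/\partial y$-fields. Computing in the chart, $[\,\varphi_\ast(y\partial_x),\varphi_\ast(xy\partial_y)\,]=y^2\partial_y-\varphi_\ast(xy\partial_x)$, and since $\varphi_\ast(xy\partial_x)$ is complete, I obtain $y^2\,\varphi_\ast(\partial/\partial y)\in\mathrm{Lie}$. Independently of $P(0)=0$, $[\,\varphi_\ast(y^2\partial_x),\varphi_\ast(xy\partial_y)\,]=y^3\partial_y-2xy^2\partial_x$, and $xy^2\partial_x=y\cdot\varphi_\ast(xy\partial_x)$ is a complete replica, so $y^3\,\varphi_\ast(\partial/\partial y)\in\mathrm{Lie}$; both fields extend regularly. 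Bracketing two fields $c_1\,\varphi_\ast(\partial/\partial y)$, $c_2\,\varphi_\ast(\partial/\partial y)$ again gives a $\partial/\partial y$-field with coefficient the $y$-Wronskian $c_1\partial_y c_2-c_2\partial_y c_1$, while bracketing with $\varphi_\ast(xy\partial_y)$ raises the $x$-degree; starting from the two seeds $y^2,y^3$ (degrees $2$ and $3$ in $y$) these operations fill in $x^m y^k\,\varphi_\ast(\partial/\partial y)\in\mathrm{Lie}$ for all $m\ge0$, $k\ge2$, and the replicas of $\varphi_\ast(xy\partial_y)$ add all $x^mxy\,\varphi_\ast(\partial/\partial y)$.

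To obtain a genuine ideal of $\hol(S_{P,Q})$ I must inject dependence on $u$ and $v$, which the fields above (polynomial in the chart coordinates $x,y$) cannot do. For this I bracket the seed $\partial/\partial y$-fields against the $\psi$-chart fields, above all $\psi_\ast(v^2\,\partial/\partial u)$: expressed in the $\varphi$-chart its $\partial/\partial y$-component genuinely involves $Q(u)$ and $v$ through $u=xP(x)/y$, $v=P(x)Q(u)/y$, so the bracket yields $\partial/\partial y$-fields whose coefficients depend on $u,v$. Each such bracket also carries a $\partial/\partial x$-correction, which I remove by subtracting suitable elements from the family of complete $\partial/\partial x$-fields $\{\,h(y)\varphi_\ast(y^2\partial_x),\,h(y)\varphi_\ast(xy\partial_x),\,x\,\varphi_\ast(y^2\partial_x)\,\}$, whose coefficients span the $\partial/\partial x$-module already contained in $\mathrm{Lie}$. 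The structural principle organizing this step is the submodule technique of Kaliman and Kutzschebauch \cite{denscrit}: a semi-compatible pair of complete fields — for instance $\varphi_\ast(y^2\partial_x)$ and $\psi_\ast(v^2\partial_u)$, whose rings of invariants are generated by $y$ and by $v$ — yields $J\cdot[\,\varphi_\ast(y^2\partial_x),\psi_\ast(v^2\partial_u)\,]\subset\mathrm{Lie}$ for the nonzero ideal $J$ furnished by the span of products of invariants, and projecting onto the $\partial/\partial y$-direction (the $\partial/\partial x$-part already lying in $\mathrm{Lie}$) delivers $I\,\varphi_\ast(\partial/\partial y)\subset\mathrm{Lie}$.

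The main obstacle is precisely this last step, namely promoting the linear family of coefficients produced above to a module-closed (hence ideal) family: the Lie algebra is not a priori a module, so one must control the $\partial/\partial x$-correction terms — which involve $P(x)$, $Q(u)$ and higher powers of $x$ — and show they remain in the already-constructed $\partial/\partial x$-part of $\mathrm{Lie}$, and one must verify the semi-compatibility hypothesis, i.e.\ that the span of products of invariants of the chosen pair contains a nonzero ideal of $\hol(S_{P,Q})$ (concretely, that $\hol(S_{P,Q})$ is integral over the subalgebra generated by the two invariants, the conductor then giving $I$). Both verifications are where the defining relations \eqref{eqsurface} must be used decisively. Finally, the holomorphic case of Theorem \ref{mainsmoothhol} follows by the same construction: every field employed is complete holomorphically and every bracket extends holomorphically over $S_{P,Q}$, so the identical computation, combined with the holomorphic submodule criterion behind Theorem \ref{modulehol}, yields the same ideal $I$ with $I\,\varphi_\ast(\partial/\partial y)\subset\overline{\mathrm{Lie}_{\mathrm{hol}}(S_{P,Q})}$.
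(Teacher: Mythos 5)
Your first stage -- producing the seed fields $x^m y^k\,\varphi_\ast(\partial/\partial y)$, $k\ge 2$, by bracketing the complete fields of Lemma \ref{easycomplete} and their replicas -- is correct and essentially coincides with the first displayed computation in the paper's proof (the paper gets the exponents $k\ge 3$ without needing the reduction to $P(0)=0$, so your use of Lemma \ref{lemma:explicitIso} is an unnecessary detour, but not an error). The genuine gap is in the second stage, which is precisely the part you yourself flag as ``the main obstacle'': upgrading these coefficients, which are polynomial in the chart coordinates $x,y$ alone, to a nonzero ideal of $\hol(S_{P,Q})$. You delegate this to the semi-compatible-pair principle of \cite{denscrit} applied to $\bigl(\varphi_\ast(y^2\partial/\partial x),\,\psi_\ast(v^2\partial/\partial u)\bigr)$, but the statement you quote is not what that machinery gives: for $f\in\ker\nu_1$, $g\in\ker\nu_2$ the identity $[f\nu_1,g\nu_2]=f\nu_1(g)\nu_2-g\nu_2(f)\nu_1$ produces only the \emph{combinations} $f\nu_1(g)\nu_2-g\nu_2(f)\nu_1$, not $J\cdot[\nu_1,\nu_2]$; separating the two summands (and hence ``projecting onto the $\partial/\partial y$-direction'') requires the stronger \emph{compatibility} hypotheses, which you neither state nor verify, and the claim that $\mathrm{span}(\CC[y]\cdot\CC[v])$ contains a nonzero ideal of $\hol(S_{P,Q})$ is likewise left as an unproven assertion. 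Since the whole content of Proposition \ref{anideal} is exactly this step, the proof is not complete.

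For comparison, the paper resolves this by a fully explicit computation that avoids the compatible-pair formalism altogether: it brackets the seed fields against the third family $\chi_\ast\bigl(xu^{\ell+1}\partial/\partial x\bigr)$ to inject powers of $u$, runs an induction on $\ell$ to obtain $x^{2+j}y^{7+k}u^{1+\ell}$ in front of both $\partial/\partial x$ and $\partial/\partial y$, transports this by the symmetry $(x,y,P)\leftrightarrow(u,v,Q)$ to the $\psi$-chart to inject $v$, and then forms a specific linear combination $\Lambda$ of the resulting $\psi$-chart fields whose $\partial/\partial x$-part cancels identically -- so no ``subtraction of correction terms'' is ever needed -- before a final bracket yields the ideal $(xyuT)$. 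If you want to complete your argument along your own lines, you must either prove the compatibility hypotheses for your chosen pair and show the resulting submodule points in the $\varphi_\ast(\partial/\partial y)$-direction, or replace the appeal to \cite{denscrit} by an explicit cancellation scheme of the kind the paper carries out.
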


\begin{proof}[Proof of Proposition \ref{anideal}]
From Lemma \ref{easycomplete} and Remark \ref{overshearcomplete} we can conclude that for $j, k \in \NN_0$ the following vector fields are complete on $S_{P,Q}$:
\begin{equation}
 \varphi_\ast\left( y^{k+2} \frac{\partial}{\partial x} \right), \;
 \varphi_\ast\left( x y^{k+1} \frac{\partial}{\partial x} \right), \;
 \varphi_\ast\left(x^{j+1} y \frac{\partial}{\partial y} \right)
\end{equation}

In the coordinates $\varphi$ we obtain the following Lie combinations:
\begin{align*}
&\left[ y^{k+2} \frac{\partial}{\partial x}, x y \frac{\partial}{\partial y} \right] + (k+2) x y^{k+2} \frac{\partial}{\partial x} &= y^{k+3} \frac{\partial}{\partial y} \\
&(k+2)^{-1} \cdot \left[ y^{k+3} \frac{\partial}{\partial y}, x^j y \frac{\partial}{\partial y} \right] &=  x^j y^{k+3} \frac{\partial}{\partial y} \\
&(k+2)^{-1} \cdot \left[ x^j y \frac{\partial}{\partial y}, x y^{k+2} \frac{\partial}{\partial x} \right] + (k+2)^{-1} \cdot j x^j y^{k+3} \frac{\partial}{\partial y} &= x^{j+1} y^{k+2} \frac{\partial}{\partial x}
\end{align*}
In front of $\frac{\partial}{\partial y}$ we have obtained the ideal $(y^3)$ in the ring $\hol(\CC^2)$, but this set is not an ideal in the ring $\hol(S_{P,Q})$.
Again by Lemma \ref{easycomplete} and Remark \ref{overshearcomplete} we see that $\chi_\ast \left(x u^{\ell+1} \frac{\partial}{\partial x} \right)$ is complete for $\ell \in \NN_0$.

We continue our calculation in the coordinates $\varphi$. Observe that
\begin{align*}
\frac{\partial}{\partial x} u &= \frac{\partial}{\partial x} \frac{x P(x)}{y} & &= \frac{x P' (x) + P(x)}{y} \\
\frac{\partial}{\partial y} u &= \frac{\partial}{\partial y} \frac{x P(x)}{y} &= -\frac{x P(x)}{y^2} &= - \frac{u}{y}
\end{align*}

We evaluate the following Lie bracket:
\begin{equation}
\begin{split}
\label{uterms}
&\left[ \varphi^\ast \left( \chi_\ast \left(x u^{\ell+1} \frac{\partial}{\partial x} \right) \right), \; x^j y^{k+5} \frac{\partial}{\partial y} \right] = \\ 
&\left[ x u^{\ell + 1} \frac{\partial}{\partial x} + (P(x) + xP'(x)) x u^\ell \frac{\partial}{\partial y}, \; x^j y^{k+5} \frac{\partial}{\partial y}\right] = \\
& (\ell+1) x^{j+2} P(x) y^{k+3} u^{\ell} \frac{\partial}{\partial x} \\
& + \left( j x^j y^{k+5} u^{\ell+1} + (k + \ell + 5) x^{j+1} \left( P(x) + x P'(x) \right) y^{k+4} u^{\ell} \right) \frac{\partial}{\partial y}
\end{split}
\end{equation}
By choosing $\ell = 0$, $j \geq 1, k \geq 0$ and subtracting known terms from the Lie algebra generated by complete vector fields, we obtain
\[
x^j y^{k+5} u \frac{\partial}{\partial y}
\]
We now want to apply equation \eqref{uterms} inductively for $\ell \in \NN$.
Observe that:
\begin{equation}
\left[ x^j y^{k+3} u^{\ell+1} \frac{\partial}{\partial y} , \; y^2 \frac{\partial}{\partial x} \right] = 2x^{j} y^{k+4} u^{\ell+1} \frac{\partial}{\partial x} - j x^{j-1} y^{k+5} u^{\ell + 1} \frac{\partial}{\partial y} 
\end{equation}
Hence, whenever we have obtained the terms $x^j y^{k+5} u^\ell \frac{\partial}{\partial y}, j \geq 1, k \geq 2,$ for a given $\ell \in \NN$, we also obtain the terms $x^{j+1} y^{k+4} u^\ell \frac{\partial}{\partial x}$. 
Induction in $\ell$ now yields
\[
x^{2+j} y^{7+k} u^{1+\ell} \frac{\partial}{\partial x} \quad \text{and} \quad x^{2+j} y^{7+k} u^{1+\ell} \frac{\partial}{\partial y}
\]
for all $j, k, \ell \in \NN_0$. Note that this also works if $P$ is not a polynomial.

By the ``formal'' symmetry $(x,y,P) \leftrightarrow (u,v,Q)$ we now obtain also the following vector fields:
\[
\psi_\ast \left( u^{2+j} v^{7+k} x^{1+\ell} \frac{\partial}{\partial u} \right) \quad \text{and} \quad
\psi_\ast \left( u^{2+j} v^{7+k} x^{1+\ell} \frac{\partial}{\partial v} \right)
\]
for all $j, k, \ell \in \NN_0$.

We can conclude that the following vector field is in the Lie algebra generated by the complete vector fields and that
\begin{multline}
\Lambda :=
\left(
v \psi_\ast \Big( u^{2+j} v^{7+k} x^{1+\ell} \frac{\partial}{\partial u} \right) + \\
(u Q'(u) + Q(u)) \psi_\ast \left( u^{2+j} v^{7+k} x^{1+\ell} \frac{\partial}{\partial v} \right) \Big)
\end{multline}
contains no $\frac{\partial}{\partial x}$-part, hence
\[
\varphi^\ast(\Lambda) = x^{1+s} u^{2+r} v^{1+m} R(x, u) \frac{\partial}{\partial y}
\]
for some polynomial resp.\ holomorphic function $R$. We might rewrite $\tilde{R}(x, u) = x u^2 R(x, u)$. Finally, we choose $s = r = 0$ and evaluate the following Lie bracket:
\[
\left[ x^{2+j} y^{3+k} u^{1+\ell} \frac{\partial}{\partial y}, v^{m+1} \tilde{R}(u,x) \frac{\partial}{\partial y} \right]
= x^{1+j} y^{1+k} u^{1+\ell} v^m T(x,y,u,v) \frac{\partial}{\partial y}
\]
with some polynomial resp.\ holomorphic function $T$. Thus, we have obtained the ideal $I = (xyu T)$ in front of $\frac{\partial}{\partial y}$.
\end{proof}

\section{Proof ot the Main Theorems}
\label{secmainproof}
We are now in the situation to prove the Theorems \ref{mainsmoothalg} and \ref{mainsmoothhol}.

\begin{proof}[Proof of Theorem \ref{mainsmoothalg}]
By Proposition \ref{prop:algtrans} the smooth complex surface $S_{P, Q}$ is a $\AutA(S_{P,Q})$-homogeneous manifold. We want to apply Theorem \ref{modulealg}: The desired submodule $L$ is provided by Proposition \ref{anideal}. It only remains to show that we obtain a generating set: pick a point $p_0 = (x_0, y_0, u_0, v_0) \in  S_{P,Q}$ such that we find a vector field $\mu \in L \neq 0$ which does not vanish in $p_0$ and such that $y_0 \neq 0$. The locally nilpotent derivation $\nu := \varphi_\ast\left( y^2 \frac{\partial}{\partial x} \right)$ does not vanish in $p_0$ and the function $f(x,y,u,v) := y - y_0$ lies in its kernel. By Remark \ref{overshearcomplete}, $f \cdot \nu$ is again complete, and its flow map will fix the point $p_0$. The induced action of the time-$1$-map on the tangent space $T_{p_0} S_{P,Q}$ is given by $w \mapsto w + d_{p_0} f \nu(p_0) = w + {y_0}^2 \cdot \varphi_{\ast}\left( \frac{\partial}{\partial x} \right)$ (see e.g.\ the calculation in \cite{denscrit}*{Corollary 2.8}). Evaluating this expression for $w = \mu(p_0)$ and noting that $\mu(p_0)$ is a non-zero vector in direction of $\varphi_{\ast}\left( \frac{\partial}{\partial y} \right)$ we see that $\mu(p_0)$ is a generating set.
\end{proof}

\begin{proof}[Proof of Theorem \ref{mainsmoothhol}]
The proof is completely analogous to the algebraic situation. The homogeneity is provided by Proposition \ref{prop:holotrans} instead. Formally, $\nu$ will be defined in the same way, however it will typically not be a locally nilpotent derivation, but still a complete vector field.
\end{proof}

\emergencystretch=1em
\begin{bibdiv}
\begin{biblist}
\bib{Danielewski2}{article}{
   author={Andrist, Rafael B.},
   author={Kutzschebauch, Frank},
   author={Lind, Andreas},
   title={Holomorphic automorphisms of Danielewski surfaces II: Structure of
   the overshear group},
   journal={J. Geom. Anal.},
   volume={25},
   date={2015},
   number={3},
   pages={1859--1889},
   issn={1050-6926},
}
\bib{fibred}{article}{
   author={Andrist, Rafael},
   author={Kutzschebauch, Frank},
   title={The fibred density property and the automorphism group of the spectral ball},
   eprint={arxiv:1501.07475},
   date={2015},
}
\bib{A-Z}{article}{
   author={Arzhantsev, I.},
   author={Flenner, H.},
   author={Kaliman, S.},
   author={Kutzschebauch, F.},
   author={Zaidenberg, M.},
   title={Flexible varieties and automorphism groups},
   journal={Duke Math. J.},
   volume={162},
   date={2013},
   number={4},
   pages={767--823},
   issn={0012-7094},
}
\bib{alghuge}{article}{
   author={Blanc, J{\'e}r{\'e}my},
   author={Dubouloz, Adrien},
   title={Affine surfaces with a huge group of automorphisms},
   journal={Int. Math. Res. Not. IMRN},
   date={2015},
   number={2},
   pages={422--459},
   issn={1073-7928},
}
\bib{Gi}{article}{
   author={Gizatullin, M. H.},
   title={Quasihomogeneous affine surfaces},
   language={Russian},
   journal={Izv. Akad. Nauk SSSR Ser. Mat.},
   volume={35},
   date={1971},
   pages={1047--1071},
   issn={0373-2436},
}
\bib{dan-giz-quasi}{article}{
   author={Gizatullin, M. H.},
   author={Danilov, V. I.},
   title={Examples of nonhomogeneous quasihomogeneous surfaces},
   language={Russian},
   journal={Izv. Akad. Nauk SSSR Ser. Mat.},
   volume={38},
   date={1974},
   pages={42--58},
   issn={0373-2436},
}
\bib{dan-giz-autos}{article}{
   author={Gizatullin, M. H.},
   author={Danilov, V. I.},
   title={Automorphisms of affine surfaces. I},
   language={Russian},
   journal={Izv. Akad. Nauk SSSR Ser. Mat.},
   volume={39},
   date={1975},
   number={3},
   pages={523--565, 703},
   issn={0373-2436},
}
\bib{dan-giz-dens}{article}{
   author={Donzelli, Fabrizio},
   title={Algebraic density property of Danilov-Gizatullin surfaces},
   journal={Math. Z.},
   volume={272},
   date={2012},
   number={3-4},
   pages={1187--1194},
   issn={0025-5874},
}
\bib{wgraphs}{article}{
   author={Flenner, Hubert},
   author={Kaliman, Shulim},
   author={Zaidenberg, Mikhail},
   title={Birational transformations of weighted graphs},
   conference={
      title={Affine algebraic geometry},
   },
   book={
      publisher={Osaka Univ. Press, Osaka},
   },
   date={2007},
   pages={107--147},
}
\bib{completions}{article}{
   author={Flenner, Hubert},
   author={Kaliman, Shulim},
   author={Zaidenberg, Mikhail},
   title={Completions of $\CC^*$-surfaces},
   conference={
      title={Affine algebraic geometry},
   },
   book={
      publisher={Osaka Univ. Press, Osaka},
   },
   date={2007},
   pages={149--201},
}
\bib{wgraphs-corr}{article}{
   author={Flenner, Hubert},
   author={Kaliman, Shulim},
   author={Zaidenberg, Mikhail},
   title={Corrigendum to our paper ``Birational transformations of weighted
   graphs'' [MR2327237]},
   conference={
      title={Affine algebraic geometry},
   },
   book={
      series={CRM Proc. Lecture Notes},
      volume={54},
      publisher={Amer. Math. Soc., Providence, RI},
   },
   date={2011},
   pages={35--38},
}
\bib{For}{book}{
   author={Forstneri{\v{c}}, Franc},
   title={Stein manifolds and holomorphic mappings},
   series={Ergebnisse der Mathematik und ihrer Grenzgebiete. 3. Folge. A
   Series of Modern Surveys in Mathematics
   },
   volume={56},
   note={The homotopy principle in complex analysis},
   publisher={Springer, Heidelberg},
   date={2011},
   pages={xii+489},
   isbn={978-3-642-22249-8},
   isbn={978-3-642-22250-4},
}
\bib{denscrit}{article}{
   author={Kaliman, Shulim},
   author={Kutzschebauch, Frank},
   title={Criteria for the density property of complex manifolds},
   journal={Invent. Math.},
   volume={172},
   date={2008},
   number={1},
   pages={71--87},
   issn={0020-9910},
}
\bib{KKhyper}{article}{
   author={Kaliman, Shulim},
   author={Kutzschebauch, Frank},
   title={Density property for hypersurfaces $UV=P(\overline X)$},
   journal={Math. Z.},
   volume={258},
   date={2008},
   number={1},
   pages={115--131},
   issn={0025-5874},
}
\bib{KKL}{article}{
   author={Kaliman, Shulim},
   author={Kutzschebauch, Frank},
   author={Leuenberger, Matthias},
   title={Complete algebraic vector fields on affine surfaces},
   date={2015},
   eprint={arxiv:1411.5484},
}
\bib{nontrans}{article}{
   author={Kovalenko, Sergei},
   title={Transitivity of automorphism groups of Gizatullin surfaces},
   journal={Int. Math. Res. Not. IMRN},
  YEAR = {2015},
    NUMBER = {21},
     PAGES = {11433--11484},
     }

\bib{K}{article}{
    AUTHOR = {Kutzschebauch, Frank},
     TITLE = {Flexibility properties in complex analysis and affine
              algebraic geometry},
 conference={ 
 title= {Automorphisms in birational and affine geometry}},
   
   book={
    SERIES = {Springer Proc. Math. Stat.},
    VOLUME = {79},
      PUBLISHER = {Springer, Cham}},
      PAGES = {387--405},
YEAR = {2014},
}

\bib{singular}{article}{
   author={Kutzschebauch, Frank},
   author={Leuenberger, Matthias},
   author={Liendo, Alvaro},
   title={The algebraic density property for affine toric varieties},
   journal={J. Pure Appl. Algebra},
   volume={219},
   date={2015},
   number={8},
   pages={3685--3700},
   issn={0022-4049},
}
\bib{Danielewski1}{article}{
   author={Kutzschebauch, Frank},
   author={Lind, Andreas},
   title={Holomorphic automorphisms of Danielewski surfaces I---density of
   the group of overshears},
   journal={Proc. Amer. Math. Soc.},
   volume={139},
   date={2011},
   number={11},
   pages={3915--3927},
   issn={0002-9939},
}
\bib{shears}{article}{
   author={Varolin, Dror},
   title={A general notion of shears, and applications},
   journal={Michigan Math. J.},
   volume={46},
   date={1999},
   number={3},
   pages={533--553},
   issn={0026-2285},
}
\bib{Varolin1}{article}{
   author={Varolin, Dror},
   title={The density property for complex manifolds and geometric
   structures},
   journal={J. Geom. Anal.},
   volume={11},
   date={2001},
   number={1},
   pages={135--160},
   issn={1050-6926},
}
\bib{Varolin2}{article}{
   author={Varolin, Dror},
   title={The density property for complex manifolds and geometric
   structures. II},
   journal={Internat. J. Math.},
   volume={11},
   date={2000},
   number={6},
   pages={837--847},
   issn={0129-167X},
}
\end{biblist}
\end{bibdiv}
\end{document}